\newtheorem{theorem}{Theorem}[section]
\theoremstyle{definition}
\newcommand{\Irr}{{\mathrm {Irr}}}
\newcommand{\cd}{{\mathrm {cd}}}
\newcommand{\acd}{{\mathrm {acd}}}
\newcommand{\PSL}{{\mathrm {PSL}}}
\newcommand{\SL}{{\mathrm {SL}}}
\begin{document}
	
\title{On average character degree of some irreducible characters  of a finite group}

\author[Z. Akhlaghi et al.]{Zeinab Akhlaghi}
\address{Zeinab Akhlaghi, Faculty of Math. and Computer Sci., \newline Amirkabir University of Technology (Tehran Polytechnic), 15914 Tehran, Iran.\newline
	School of Mathematics,
	Institute for Research in Fundamental Science(IPM)
	P.O. Box:19395-5746, Tehran, Iran.}
\email{z\_akhlaghi@aut.ac.ir}

\thanks{
	The  author  is supported by a grant from IPM (No. 1400200028).}
\subjclass[2000]{20C15,20D10,20D15}

\begin{abstract}Let $G$ be a finite group and $N$ be a non-trivial normal subgroup of $G$, such that the average character degree of irreducible characters in $\Irr(G|N)$ is less than  or equal to $16/5$. Then we prove that $N$ is solvable. Also,  we prove the solvability of  $G$,  by assuming   that the average character degree of irreducible characters in $\Irr(G|N)$ is  strictly less than $16/5$. We show that the bounds are sharp. 

\end{abstract}
\keywords{finite group, character degree sum,  average character degree}


\maketitle

\section{Introduction}
The set of character degrees of a finite group  has been an object of considerable interest for a long time. In some of researches, the authors associate 
  some invariants   to  the character degrees  of a finite group and then they study the influence of this invariant  on the structure of the  group. As an example, the average character degree of finite groups has been studied  by a lot of researchers. We denote by $\acd(G)$ the  average character degree of  a finite group $G$, which is defined as following:

$$\acd(G)=\frac{\sum\limits_{\chi\in \Irr(G)}\chi(1)}{|\Irr(G)|}.$$

The influence of $\acd(G)$ on the structure of $G$ has attracted the attention of lots of authors. In \cite{isaacs}, it is conjectured that if $\acd(G)<\acd(A_5)=16/5$, then $G$ is solvable.  The conjecture proved by A. Moret\'o and H.N. Nguyen, in \cite{moreto}. Let $N$ be a non-trivial normal subgroup of $G$, $\Irr(G|N)$ be  the set of those irreducible characters of $G$  whose kernels do not contain $N$ and $\cd(G|N)$ be the set of character degrees of irreducible characters in $\Irr(G|N)$.  By $\acd(G|N)$ we mean the average character degree of irreducible characters in $\Irr(G|N)$.  We prove that if $\acd(G|N)\leq 16/5$, then $N$ is solvable, moreover if $\acd(G|N)<16/5$, then $G$ is solvable. Note that the bound is sharp, in fact if we take $G_n\cong C_n\times A_5$, for some integer $n$, then $\acd(G_n|G_n)=(16n-1)/(5n-1)$. Therefore,  
the ratio $\acd(G_n|G_n)$ would converge to $16/5$ as $n$ tends to infinity. Also $\acd(A_5\times C_2|C_2)=16/5$, which means that, to prove the   solvability of $G$ we need $\acd(G|N)$ to be strictly less than $16/5$.

To obtain  the main result, we  follow  the techniques in \cite{moreto}.  It is worth mentioning that  $\acd(G|N)< 16/5$ does not infer $\acd(N)<16/5$ or vice versa. For instance, if we take $G$ to be a Frobenius group $2^6:63$ and $N$ to be a Frobenius group $2^6: 7$. Then $\acd(G|N)=117/55<16/5$ and $\acd(N)= (9\times 7+7)/16> 16/5$.
For the inverse we just need to take a look at the group $G=\SL_2(5)$ and $N={\bf Z}(G)$. Then $\acd(N)=1$, while $\acd(G|N)=(2+2+4+6)/4>16/5$. First example says that to prove the solvability of $N$, when $\acd(G|N)<16/5$, we can not refer to the  Theorem A of \cite{moreto}, directly. 

Throughout  the paper $G$ is a finite group.  Let $N\unlhd G$  and $\lambda\in \Irr(N)$. By $\acd(G|\lambda)$ we mean the average character degree of  irreducible characters of $G$ above $\lambda$.   For the rest of notation, we follow \cite{Isaacs}.

\section{Main Results }


\begin{theorem}
	Let $G$ be a finite group and  $N$ be a  non-trivial normal subgroup of $G$. If $\acd(G|N)\leq  16/5$, then $N$ is solvable.
\end{theorem}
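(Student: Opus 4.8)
The plan is to argue by contraposition, showing that if $N$ is non-solvable then in fact $\acd(G|N) > 16/5$ strictly. Choose a counterexample $(G,N)$ with $|G|+|N|$ minimal, so that $N$ is non-solvable while $\acd(G|N)\le 16/5$. The engine of the induction is the elementary but crucial observation that for any $M\unlhd G$ with $M\le N$ one has a disjoint decomposition $\Irr(G|N)=\Irr(G|M)\,\sqcup\,\Irr(G/M\mid N/M)$: a character fails to contain $N$ in its kernel exactly when it either fails to contain $M$, or contains $M$ but not $N/M$. Writing $T(\cdot)$ for the sum of degrees and $k(\cdot)$ for the number of characters of each set, this gives $T(G|N)=T(G|M)+T(G/M\mid N/M)$ and $k(G|N)=k(G|M)+k(G/M\mid N/M)$. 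Hence the quantity $f(G|N):=T(G|N)-\tfrac{16}{5}k(G|N)$ is additive along such decompositions, and proving $\acd(G|N)>16/5$ is equivalent to proving $f(G|N)>0$.

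Using minimality I first constrain the structure of $N$. Let $M$ be a minimal normal subgroup of $G$ contained in $N$. If both $\acd(G|M)\le 16/5$ and $\acd(G/M\mid N/M)\le 16/5$, then minimality forces $M$ and $N/M$ both to be solvable (otherwise one of the strictly smaller pairs $(G,M)$ or $(G/M,N/M)$ would itself be a counterexample), whence $N$ is solvable, a contradiction. So at least one section contributes a strictly positive surplus to $f$, and the remaining configurations are exactly those in which $N$ carries a non-abelian chief factor; the heart of the matter is the base case in which $N=M$ is a non-abelian minimal normal subgroup, i.e. $N\cong S^k$ for a non-abelian simple group $S$. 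The delicate point is that a solvable section sitting \emph{above} a non-abelian one (as happens when $M\cong A_5$ lies in $N=A_5\times C_2$ with solvable quotient $N/M\cong C_2$) contributes a negative term to $f$, so one must verify that the positive surplus from the non-abelian factor dominates. Since every degree is at least $1$, the deficit of a solvable top is controlled by its character count, and additivity of $f$ together with the weight $k(G|S^k)$ shows the non-abelian contribution outweighs it, precisely as in that example where the surplus of $A_5$ beats the single-unit deficit of $C_2$.

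For the base case $N\cong S^k$ I use Clifford theory relative to $N$. Every $\chi\in\Irr(G|N)$ lies over a non-trivial $\lambda=\lambda_1\times\cdots\times\lambda_k\in\Irr(N)$, and grouping the characters by the $G$-orbit of $\lambda$ and its inertia group $T=I_G(\lambda)$ expresses the degrees as $|G:T|$ times the degrees in $\Irr(T\mid\lambda)$. Because $S$ is simple and non-abelian it has no non-trivial linear characters, so each non-trivial constituent $\lambda_i$ has degree at least $2$ and the multiset of degrees of $\Irr(G|N)$ dominates, orbit by orbit, a scaled copy of the non-trivial degrees of $S$. This reduces matters to the purely simple-group estimate $\acd(S|S)=\frac{T(S)-1}{k(S)-1}>16/5$, valid for every non-abelian simple group, with the smallest value $15/4$ occurring for $S=A_5$; this is where the classification of finite simple groups enters, via known lower bounds on the smallest non-trivial degrees and on the shape of the degree set. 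This is also the point at which the global Moret\'o--Nguyen theorem (the case $N=G$) is refined to the relative setting, rather than invoked directly.

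The main obstacle is this base case, and the genuine difficulty is two-fold. First, the Clifford-theoretic passage from $\acd(G|S^k)$ down to $\acd(S|S)$ must account honestly for ramification, for the fusion of $\Irr(S)^k$ under the wreath-type action of $G$, and for the extendibility of $\lambda$ to its inertia group, none of which is automatic, especially when $k>1$. Second, the uniform verification that $\acd(S|S)>16/5$ across all families of non-abelian simple groups requires CFSG-based estimates rather than a single computation. By contrast, I expect the bookkeeping for the solvable sections and the final assembly via additivity of $f$ to be routine once the base estimate and the domination inequality are in hand.
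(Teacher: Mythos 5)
Your overall framework --- a minimal counterexample together with the additivity of $f(G|N)=T(G|N)-\tfrac{16}{5}k(G|N)$ along the decomposition $\Irr(G|N)=\Irr(G|M)\sqcup\Irr(G/M\,|\,N/M)$ --- is sound, and it is essentially the paper's use of Qian's Lemma 2.1. But the proposal has two genuine gaps, and it misidentifies where the difficulty lies. First, when a non-abelian chief factor $M\le N'$ sits at the bottom, the deficit contributed by the sections above it is up to $\tfrac{11}{5}$ per character of $\Irr(G/M\,|\,N/M)$, and that count is not a priori controlled by the surplus $f(G|M)$: you assert that ``additivity of $f$ together with the weight $k(G|S^k)$ shows the non-abelian contribution outweighs it,'' but you give no mechanism. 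The paper supplies one: by Moret\'o--Nguyen's Lemma 1, $M$ has an extendable irreducible character of degree $d_0\ge 6$ (or of degrees $4^k,5^k$, resp.\ $7^k,8^k$, when $S\cong\PSL_2(5)$ or $\PSL_2(7)$), and Gallagher's theorem injects $\Irr(G/M)$ into the set of characters of $G$ of degree divisible by $d_0$, each carrying surplus at least $5d_0-16\ge 14$; that injection is what beats the low-degree deficit. Without it the ``domination'' claim is unsupported. Your base-case estimate $\acd(S|S)>16/5$ is likewise left to unproved CFSG bounds, and the Clifford-theoretic descent from $\acd(G|S^k)$ to $\acd(S|S)$ --- which you yourself flag as not automatic --- is exactly where ramification and fusion can spoil a naive averaging argument.

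Second, and more seriously, the hardest configurations are not of the form ``$N=S^k$ minimal normal.'' After the chief-factor analysis one is left with a non-solvable $M\le N$ whose $G$-minimal normal subgroups $T$ are all solvable; concretely $M$ is quasi-simple, e.g.\ $M\cong\SL_2(5)$ or the triple cover of $A_6$, with $T\le\Center(M)$ of order $2$ or $3$. Here your decomposition puts the solvable piece $T$ at the bottom and the non-abelian factor on top; minimality gives $\acd(G/T\,|\,N/T)>16/5$, so to conclude you must show $\acd(G|T)>16/5$, and nothing in your plan produces this. The paper gets it by showing that $\Irr(G|T)$ contains a character of degree $2$ or $3$, running the primitive linear group analysis to pin down $G/C\in\{A_5,\PSL_2(7),A_6\}$ with $G=MC$ a central product, and then computing $\acd(M|\lambda)=14/4$, resp.\ $36/5$, for the faithful characters of $\SL_2(5)$ and of the triple cover of $A_6$. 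These quasi-simple cases are the genuine heart of the theorem --- they are why $\SL_2(5)$ appears as a near-counterexample in the introduction --- and your proposal does not address them.
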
 
\begin{proof}
	Assume, on the contrary,   $G$ is an example  with minimal order, such that $G$ has  a normal  non-solvable  subgroup $N$ and $\acd(G|N)\leq 16/5$.  Thus,
	$$\acd(G|N)=\frac{\sum \limits_{\chi\in \Irr(G|N)}\chi(1)}{|\Irr(G|N)|}\leq  16/5.$$ 
	
	Let $n_{d}(G|N)$ be the number of characters in $\Irr(G|N)$ of degree $d\geq 1$. Then, $\sum \limits_{\chi\in \Irr(G|N)}\chi(1)=\sum\limits_{d\geq 1}dn_d(G|N)$ and $|\Irr(G|N)|=\sum\limits_{d\geq 1}n_d(G|N)$.  So by the above inequality we have
	
	$$\sum\limits_{d\geq 4}(5d-16)n_{d}(G|N)\leq  11n_1(G|N)+ 6n_2(G|N) +n_3(G|N). \   \   \   \  \  \  \  \  (*)$$
	
	First, we claim that there is no non-solvable minimal normal subgroup of $G$ contained in $N'$. On the contrary, let $M\leq N'$ be a  non-solvable minimal normal subgroup of $G$. Then $M$ is a direct product of $k$ copies of a  non-abelian finite simple group $S$, for some integer $k$. By the hypothesis, we have $M\leq G'$ and so $M$ is contained in the kernel of every linear character of $G$. We show that $M$ is contained in the kernel of  every irreducible  character of $G$  of degree $2$. Let $\chi\in \Irr(G)$ such that $\chi(1)=2$. Since, non-abelian finite simple groups do not have any irreducible character of degree $2$ and the only linear character of a simple group is the principle character, then  $\chi_M=2.1_M$. Therefore $M$ lies in the kernel of $\chi$, as wanted. Hence $n_d(G|N)=n_d(G/M|N/M)$, for  $d=1,2$. We break the proof of the claim in two following cases:  
	
	\bigskip

	{\bf Case 1.} Let $S\ncong\PSL_2(q)$, for $q=5,7$. It is well-known that  $S$  and so $M$ do not have any irreducible character of degree $3$. So, by the similar discussion as above we have $n_3(G|N)=n_3(G/M|N/M)$ and we conclude that 
	$$n_1(G|N)+n_2(G|N)+n_3(G|N)=$$$$n_1(G/M|N/M)+n_2(G/M|N/M)+n_3(G/M|N/M)\leq |\Irr(G/M|N/M)|.$$
	On the other hand, by \cite[Lemma 1]{moreto},  $M$ has an irreducible character $\theta$ with degree  $d_0\geq 6$ which is extendable to $G$. Then, by Gallagher's theorem (see \cite[Corollary 6.17]{Isaacs}), we have   $|\Irr(G/M)|\leq \sum\limits_{d_0\mid d}n_{d}(G|M)\leq  \sum\limits_{d\geq 6}n_d(G|M)$, which implies that
	
	$$|\Irr(G/M|N/M)|\leq|\Irr(G/M)|\leq \sum\limits_{d\geq 6}n_d(G|M) \leq \sum\limits_{d\geq 6}n_d(G|N). $$
	
	Therefore,  $$n_1(G|N)+n_2(G|N)+n_3(G|N)\leq \sum\limits_{d\geq 6}n_d(G|N).$$ Hence,  
	$$\sum\limits_{d\geq 4}(5d-16)n_{d}(G|N)\geq \sum\limits_{d_0 \mid d}(5d-16)n_{d}(G|N) \geq (5d_0-16)(n_1(G|N)+ n_2(G|N) +n_3(G|N)) \geq $$$$ 11n_1(G|N)+6n_2(G|N)+n_3(G|N).$$
	
	The last inequality,  turns to  equality if and only if $n_i(G|N)=0$, for  each $i\in \{1,2,3\}$,  which is  contradicting $(*)$, as in this case $\sum\limits_{d\geq 6}(5d-16)n_d(G|N)>0$. So in any case we get a contradiction by $(*)$.    
	
	\bigskip
	
	{\bf Case 2.} Let $S\cong \PSL_2(q)$, for $q=5,7$. Then, $S$  has exactly two irreducible characters of degree $3$ and $M$ has two irreducible characters of degrees $d_1$ and $d_2$, where  $(d_1,d_2)=(5^k,4^k)$ or $(8^k,7^k)$, when $q=5$ or $7$,  respectively,  which are extendable to $G$. Recall that by  the discussion proceeding Case 1, $n_d(G|N)=n_d(G/M|N/M)$, when $d=1,2$. On the other hand, using  Gallagher's theorem we have $n_{t}(G/M|N/M)\leq n_{t}(G/M)= n_{t.d_i}(G|M)\leq n_{t.d_i}(G|N)$, for $i=1,2$ and integer $t$. Hence,   $$n_1(G|N)+n_2(G|N)+n_3(G/M|N/M)\leq n_{d_1}(G|N) +n_{2.d_1}(G|N)+n_{3.d_1}(G|N).$$ Noting that $5d_1- 16\geq 9$, we come to the following conclusion,

	$$9n_1(G|N)+6n_2(G|N)+n_3(G/M|N/M)\leq 9(n_{d_1}(G|N) +n_{2.d_1}(G|N)+n_{3.d_1}(G|N))$$$$\leq  
	\sum\limits_{d\geq d_1}(5d-16)n_d(G|N).$$
	
	We show that  $n_3(G|M)\leq 2kn_1(G)$. Noticing that $M$ does not have any character of degree $2$ and any non-principal linear character,   we deduce that
	   $\chi_M$ is an irreducible character of $M$, for every character $\chi$ of  degree $3$ in $\Irr(G|M)$.    On the other hand,  $M$ has exactly $2k$ characters of degree $3$ and so,  by Gallagher's theorem, there exist at most $2kn_1(G)$ characters of degree $3$ in $\Irr(G|M)$, as wanted.
	
	Recalling that $n_1(G)=n_1(G/M)=n_{d_2}(G|M)\leq n_{d_2}(G|N)$, and together with the above conclusion we infer that 
	
	$$\sum\limits_
	{d\geq d_2}
	(5d - 16)n_d(G|N) \geq (5.d_2 - 16)n_{d_2}(G|N) +\sum\limits_{d\geq d_1}
	(5d - 16)n_d(G|N)
	\geq  $$$$
	(5d_2 - 16)n_1(G) + 9n_1(G|N) + 6n_2(G|N) +  n_3(G|N) - 2kn_1(G)
	\geq $$$$ 2n_1(G) + 9n_1(G|N) + 6n_2(G|N) + n_3(G|N) > 11n_1(G|N)+6n_2(G|N)+n_3(G|N),$$
	
	as $5d_2-16\geq 2+2k$, for $k\geq 1$, which is  contradicting $(*)$.

	Therefore, our claim is proved, hence we  may assume every normal minimal subgroup of $G$, contained in $N'$ is solvable.  Let $M\unlhd G$ contained in $N$  be minimal such that $M$ is non-solvable. Notice that  $M$ is a perfect group contained in the last term of derived series of $N$. Let $T\leq M$, such that $T$ is a minimal normal subgroup of $G$.  In addition, if $[M, R]\not =1$, we assume $T\leq [M,R]$, where $R$ is the solvable radical subgroup of $M$, and if it is possible we assume $T$ has order $2$. Therefore $T\leq M'\leq G'$ and $N/T$ is non-solvable. As, $G$ is a counterexample of minimal order, we  have $\acd(G/T|N/T)> 16/5$ which means
	
	$$\sum\limits_{
		d\geq 4}(5d-16)n_d(G/T|N/T) > 11n_1(G/T|N/T) + 6n_2(G/T|N/T) + n_3(G/T|N/T).\  \  \  \  (**)$$  
	Since $T$ lies  in the kernel of every linear character of $G$, then $n_1(G/T|N/T)=n_1(G|N)$ and considering $(*)$ and $(**)$  we have
	$$6n_2(G/T|N/T) + n_3(G/T|N/T)<6n_2(G|N)+n_3(G|N).$$
	
	Hence $\Irr(G|T)$ contains a character of degree $2$ or $3$, say $\chi$. Replacing $N$ by $T$ and repeating the same discussion  in the last paragraph of  page 458 and the first three paragraphs of page 459 of \cite{moreto}, we see that  if $K=\ker(\chi)$, then  $G/K$ is a primitive group of degree $2$ or $3$ and also  $G/C\cong A_5$, $\PSL_2(7)$ or $A_6$, where $C/K=Z(G/K)$. In addition,  $G=MC$ is a central product with central  subgroup ${\bf Z}(M)=M\cap C$. Moreover, the cases $G/C\cong A_6$ and $\PSL_2(7)$ only happen when $\chi(1)=3$.

	Note that  $M/(M\cap C)\cong G/C$, so by the choice of $T$, we get that $T\leq M\cap C={\bf Z}(M)$. As, ${\bf Z}(M)$ lies in the Schur multiplier of $M/{\bf Z}(M)$, we deduce that $T\cong C_2$ when $M/{\bf Z}(M)\cong A_5$ or $\PSL_2(7)$,  and  $T\cong C_2$ or $C_3$, when  $M/{\bf Z}(M)\cong A_6$. 
	
	First let  $M/{\bf Z}(M)\cong A_6$ or $\PSL_2(7)$ and $T\cong C_2$. Then,  $\chi_M$ is an irreducible character of $M$ of degree $3$. In both cases, the kernel of  irreducible  characters of $M$ of degree $3$  contains $T\cong C_2$, which is impossible. Hence, $G/C\cong A_5$ or $A_6$ and when the second case occurs, $T\cong C_3$ and so ${\bf Z}(M)\cong C_3$, since we assumed $T$ has order $2$, if it is possible.
	
	Let $G/C\cong A_5$. Then $T\cong {\bf Z}(M)\cong C_2$,  $M\cong \SL_2(5)$ and  $\Irr(G|T)= \Irr(G|\lambda)$, where $\lambda$ is the only  non-trivial character of $T$. By \cite[Lemma 2]{moreto}, we have
	$\acd(G|T)=\acd(G|\lambda)	\geq \acd(M|\lambda)=14/4 > 16/5$.  Recall that, as $G$ is a   counterexample with minimal order, $\acd(G/T|N/T) > 16/5$, hence by\cite[Lemma 2.1]{qian}, $\acd(G|N) > 16/5$, which is a contradiction.

	Let $G/C\cong A_6$ and $T\cong {\bf Z}(M)\cong C_3$. 
	Similar to the above case, $\acd(G|T)\geq \acd(M|\lambda)=36/5 > 16/5$.  Also by the minimality of $G$ we have $\acd(G/T|N/T) > 16/5$. Therefore, $\acd(G|N) > 16/5$, a contradiction. So, the proof is complete.
		\begin{small} $\blacksquare$
	\end{small}
\end{proof}
\begin{theorem}
	Let $G$ be a finite group and $N$ be a  non-trivial normal subgroup of $G$ such that $\acd(G|N)< 16/5$. then $G$ is solvable. 
\end{theorem}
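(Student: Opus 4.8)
\emph{Proof proposal.} The plan is to run a minimal-counterexample argument that feeds off the statement just proved. Assume $G$ is a counterexample of minimal order, so that $G$ is non-solvable while $N\neq 1$ and $\acd(G|N)<16/5$. Since $16/5\leq 16/5$, the previous theorem applies and gives that $N$ is solvable; consequently $G/N$ is non-solvable. I would first reduce to the case that $N$ is a minimal normal subgroup of $G$, hence an elementary abelian $p$-group. Indeed, if $N$ is not minimal normal, choose a minimal normal subgroup $T$ of $G$ with $T\leq N$ and $T\neq N$. Then $G/T$ is non-solvable and $N/T\neq 1$, so by minimality of $|G|$ we must have $\acd(G/T|N/T)\geq 16/5$ (otherwise $(G/T,N/T)$ would be a smaller counterexample). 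Since $\Irr(G|N)=\Irr(G|T)\sqcup\Irr(G/T|N/T)$, the quantity $\acd(G|N)$ is a convex combination of $\acd(G|T)$ and $\acd(G/T|N/T)$, whence $\acd(G|T)<16/5$ and $(G,T)$ is again a counterexample. Replacing $N$ by $T$, I may assume $N$ is minimal normal and abelian.

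With $N$ abelian and $G/N$ non-solvable, the heart of the matter is to show that non-solvability forces $\acd(G|N)\geq 16/5$, contradicting the strict hypothesis. From $\acd(G|N)<16/5$ one obtains the degree-counting inequality $\sum_{d\geq 4}(5d-16)n_d(G|N)<11n_1(G|N)+6n_2(G|N)+n_3(G|N)$, so the left-hand side being non-negative forces $\Irr(G|N)$ to contain characters of small degree; a short analysis as in the previous proof then produces one, say $\chi$, of degree $2$ or $3$. Setting $K=\kernel(\chi)$, I would repeat the argument of pp.~458--459 of \cite{moreto} with $N$ in the role played there by the normal subgroup: $G/K$ is a primitive group of degree $2$ or $3$, whence $G/C\cong A_5$, $\PSL_2(7)$ or $A_6$ for $C/K=\Center(G/K)$, and $G=MC$ is a central product with a normal quasi-simple or simple subgroup $M$, $\Center(M)=M\cap C$, whose central quotient lies in $\{A_5,\PSL_2(7),A_6\}$.

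In each case I would invoke \cite[Lemma 2]{moreto} to bound $\acd(G|\lambda)$ below by the average degree over $\lambda$ of the factor $M$. When $N\leq\Center(M)$ and $M$ is a genuine central cover, this average is strictly larger than $16/5$ (for instance $\acd(\SL_2(5)|\lambda)=14/4$ and $\acd(3.A_6|\lambda)=36/5$, exactly as computed in the proof of the previous theorem); then \cite[Lemma 2.1]{qian} gives $\acd(G|N)>16/5$, a contradiction. The remaining possibility is that $M$ is simple with $N\not\leq M$, so that $N$ is an independent central factor and $G$ has a section isomorphic to $A_5\times C_p$; there one computes $\acd(G|N)=16/5$ exactly (the split $\PSL_2(7)$ and $A_6$ factors would instead give averages strictly above $16/5$, so they too are excluded). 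This last configuration is the sharp example $\acd(A_5\times C_2|C_2)=16/5$ recorded in the introduction, and it is ruled out precisely by the strict hypothesis $\acd(G|N)<16/5$, yielding the final contradiction and proving $G$ solvable.

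The main obstacle I anticipate is the bookkeeping in the last two paragraphs: verifying that the strict inequality together with non-solvability really does produce a character of degree $2$ or $3$ (rather than only linear or only large-degree characters), and, above all, isolating the single boundary configuration where $\acd(G|N)=16/5$ is attained. Everything hinges on showing that non-solvability can never push the relative average strictly below $16/5$, with equality occurring exactly in the $A_5\times C_p$-type situation; the strictness in the hypothesis is then exactly what converts ``$\geq 16/5$ with a known equality locus'' into a genuine contradiction.
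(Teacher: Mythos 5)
Your proposal has genuine gaps, and it also misses the route the paper actually takes. The decisive problem is in the middle of your argument, where you try to transplant the machinery of the first theorem's proof into a setting where its hypotheses fail. First, the inequality $\sum_{d\geq 4}(5d-16)n_d(G|N)<11n_1(G|N)+6n_2(G|N)+n_3(G|N)$ only tells you that $\Irr(G|N)$ contains a character of degree $1$, $2$ or $3$; in the first theorem the linear terms could be cancelled because the relevant subgroup $T$ sat inside $G'$, forcing $n_1(G/T|N/T)=n_1(G|N)$. Here $N$ is abelian and need not lie in $G'$ --- in the boundary example $A_5\times C_2$ with $N=C_2$ the set $\Irr(G|N)$ does contain a linear character --- so you cannot rule out that all small-degree members of $\Irr(G|N)$ are linear, and the analysis never starts. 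Second, even granting a character $\chi\in\Irr(G|N)$ of degree $2$ or $3$, the argument of pp.~458--459 of \cite{moreto} needs $G/\kernel(\chi)$ to be a \emph{non-solvable} primitive linear group; there this was guaranteed because $\chi$ lay over a subgroup of a perfect non-solvable normal subgroup $M$. In your setting $N$ is solvable and the non-solvability of $G$ lives entirely above $N$, so $G/\kernel(\chi)$ may well be solvable and no conclusion $G/C\cong A_5$, $\PSL_2(7)$ or $A_6$ is available. Finally, the claim that the equality locus $\acd(G|N)=16/5$ is exactly the $A_5\times C_p$ configuration is asserted rather than proved; isolating it would require essentially a new classification argument that you do not supply.

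The paper avoids all of this. Its proof is short and needs no minimal counterexample: since $\acd(G|N)<16/5\leq 16/5$, the first theorem gives that $N$ is solvable, so if $G$ were non-solvable then $G/N$ would fail to be $r$-solvable for some prime $r$. Theorem~3.2 of \cite{qian} then gives $\acd(G|\lambda)\geq\lambda(1)f(r)\geq f(r)\geq 16/5$ for every $\lambda\in\Irr(N)$, and averaging over the $\lambda$ via Lemma~2.1 of \cite{qian} yields $\acd(G|N)\geq 16/5$, a contradiction. The key input your proposal is missing is precisely this quantitative lower bound on $\acd(G|\lambda)$ coming from the non-$r$-solvability of $G/N$; without it, you would in effect have to reprove Qian's theorem inside your case analysis.
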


\begin{proof}
	Assume $G$ is not solvable. By the previous theorem $N$ is solvable. Hence, $G/N$ is non-solvable. Therefore, for some prime $r$, $G/N$ is not $r$-solvable. Then by \cite[Theorem 3.2]{qian},   we have $\acd(G|\lambda)\geq \lambda(1)f(r)$, and so $\acd(G|\lambda)\geq f(r)$, for each $\lambda\in \Irr(N)$, where $f(r)$ is a function defined in \cite{qian}. As, $f(r)\geq 16/5$, for each  prime $r\geq 2$ and considering \cite[Lemma 2.1]{qian},  we deduce that $\acd(G|N)\geq 16/5$,  a contradiction. Now, the proof is complete.
	\begin{small} $\blacksquare$
		\end{small}
\end{proof}


\begin{thebibliography}{1}




\bibitem{isa} I.M. Isaacs, M. Loukaki, and A. Moret\'o, The average degree of an irreducible character of a
finite group, Israel J. Math. 197 (2013), 55–67.

\bibitem{Isaacs} I.M. Isaacs, {Character Theory of Finite Groups}, New York NY: Academic Press 1976.






\bibitem{moreto} A. Moret\'o, H. N.  Nguyen, On the average character degree of 
finite groups, Bulletin of the London Mathematical Society, 46 (3) (2014), 454–462.

\bibitem{qian} G. Qian, On the average character degree and the average class size in finite groups, J. Algebra, 423 (2015), 1191-1212. 

\end{thebibliography}
\end{document}